\font \sevenrm=cmr7
\font \fiverm=cmr5
 \newcommand{\nc}{\newcommand}
\newtheorem{thm}{Theorem}
\newtheorem{cor}[thm]{Corollary}
\newtheorem{lem}[thm]{Lemma}
\newtheorem{prop}[thm]{Proposition}
\nc{\comment}[1]{[[{\tt #1}]] }
\nc{\Cal}[1]{{\mathcal {#1}}}
\nc{\mop}[1]{\mathop{\hbox {\rm #1} }\nolimits}
\nc{\gmop}[1]{\mathop{\hbox {\bf #1} }\nolimits}
\nc{\smop}[1]{\mathop{\hbox {\sevenrm #1} }\nolimits}
\nc{\ssmop}[1]{\mathop{\hbox {\fiverm #1} }\nolimits}
\nc{\mopl}[1]{\mathop{\hbox {\rm #1} }\limits}
\nc{\smopl}[1]{\mathop{\hbox {\sevenrm #1} }\limits}
\nc{\ssmopl}[1]{\mathop{\hbox {\fiverm #1} }\limits}
\nc{\frakg}{{\frak g}}
\nc{\g}[1]{{\frak {#1}}}
\def \restr#1{\mathstrut_{\textstyle |}\raise-6pt\hbox{$\scriptstyle #1$}}
\def \srestr#1{\mathstrut_{\scriptstyle |}\hbox to
  -1.5pt{}\raise-4pt\hbox{$\scriptscriptstyle #1$}}
\nc{\wt}{\widetilde} \nc{\wh}{\widehat}
\nc{\redtext}[1]{\textcolor{red}{\tt [[#1]]}}
\nc{\bluetext}[1]{\textcolor{blue}{#1}}
\nc\fleche[1]{\mathop{\hbox to #1 mm{\rightarrowfill}}\limits}
\nc{\ignore}[1]{}
\def\semi{\mathrel{\times}\kern -.85pt\joinrel\mathrel{\raise
    1.4pt\hbox{${\scriptscriptstyle |}$}}}
\nc\R{{\mathbb R}}
\nc\N{{\mathbb N}}
\nc\inver{^{-1}}
\nc\point{\hbox{\bf .}}
\nc\un{\hbox{\bf 1}}
\def\link#1#2{\raise -2pt\hbox{$\scriptstyle #1-\!\!-\!\!- #2$}}
\def\slink#1#2{\raise -1.5pt\hbox{$\scriptscriptstyle #1-\!\!\!-\!\!\!- #2$}}
\def\racine{{\scalebox{0.3}{ 
\begin{picture}(12,12)(38,-38)
\SetWidth{0.5} \SetColor{Black} \Vertex(45,-33){5.66}
\end{picture}}}}
\def\racinebis{{\scalebox{0.3}{ 
\begin{picture}(12,12)(38,-38)
\SetWidth{1} \SetColor{Black} \Vertex(45,-33){5.66}
\SetColor{White}
\Vertex(45,-33){4.66}
\end{picture}}}}
 \def\arbrea{\,{\scalebox{0.15}{ 
  \begin{picture}(8,55) (370,-248)
    \SetWidth{2}
    \SetColor{Black}
    \Line(374,-244)(374,-200)
    \Vertex(374,-197){9}
    \Vertex(375,-245){12}
  \end{picture}
}}\,}
 \def\arbreba{\,{\scalebox{0.15}{ 
\begin{picture}(8,106) (370,-197)
    \SetWidth{2}
    \SetColor{Black}
    \Line(374,-193)(374,-149)
    \Vertex(374,-146){9}
    \Vertex(375,-194){12}
    \Line(374,-142)(374,-98)
    \Vertex(374,-95){9}
  \end{picture}
}}\,}
 \def\arbrebb{\,{\scalebox{0.15}{ 
  \begin{picture}(48,48) (349,-255)
    \SetWidth{2}
    \SetColor{Black}
    \Vertex(375,-252){12}
    \Line(376,-250)(395,-215)
    \Line(373,-251)(354,-214)
    \Vertex(353,-211){9}
    \Vertex(395,-213){9}
  \end{picture}
}}}
\def\arbreca{\,{\scalebox{0.15}{
\begin{picture}(8,156) (370,-147)
    \SetWidth{2}
    \SetColor{Black}
    \Line(374,-143)(374,-99)
    \Vertex(374,-96){9}
    \Vertex(375,-144){12}
    \Line(374,-92)(374,-48)
    \Vertex(374,-45){9}
    \Line(374,-42)(374,2)
    \Vertex(374,5){9}
  \end{picture}
}}\,}
\def\arbrecabis{\,{\scalebox{0.15}{
\begin{picture}(8,156) (370,-147)
    \SetWidth{2}
    \SetColor{Black}
    \Line(374,-143)(374,-99)
    \Vertex(374,-96){9}
    \Vertex(375,-144){12}
    \Line(374,-92)(374,-48)
    \Vertex(374,-45){9}
    \Line(374,-42)(374,2)
    \Vertex(374,5){9}
    \SetColor{White}
    \Vertex(375,-144){10}
 \end{picture}
}}\,}
\def\arbrecb{\,{\scalebox{0.15}{
\begin{picture}(48,94) (349,-255)
\SetWidth{2}
\SetColor{Black}
\Line(376,-204)(395,-169)
\Line(373,-205)(354,-168)
\Vertex(353,-165){9}
\Vertex(395,-167){9}
\Vertex(374,-205){9}
\Line(374,-246)(374,-209)
\Vertex(374,-252){12}
\end{picture}}}\,}
\def\arbrecbbis{\,{\scalebox{0.15}{
\begin{picture}(48,94) (349,-255)
\SetWidth{2}
\SetColor{Black}
\Line(376,-204)(395,-169)
\Line(373,-205)(354,-168)
\Vertex(353,-165){9}
\Vertex(395,-167){9}
\Vertex(374,-205){9}
\Line(374,-246)(374,-209)
\Vertex(374,-252){12}
\SetColor{White}
\Vertex(353,-165){7}
\Vertex(395,-167){7}
\Vertex(374,-205){7}
\end{picture}}}\,}
\def\arbrecc{\,{\scalebox{0.15}{
 \begin{picture}(48,98) (349,-205)
    \SetWidth{2}
    \SetColor{Black}
    \Vertex(375,-202){12}
    \Line(376,-200)(395,-165)
    \Line(373,-201)(354,-164)
    \Vertex(353,-161){9}
    \Vertex(395,-163){9}
    \Line(353,-160)(353,-113)
    \Vertex(353,-111){9}
  \end{picture}
}}\,}
\def\arbrecd{\,{\scalebox{0.15}{
\begin{picture}(48,52) (349,-251)
    \SetWidth{2}
    \SetColor{Black}
    \Vertex(375,-248){12}
    \Line(376,-246)(395,-211)
    \Line(373,-247)(354,-210)
    \Vertex(353,-207){9}
    \Vertex(395,-209){9}
    \Line(375,-247)(375,-206)
    \Vertex(376,-203){9}
  \end{picture}
 }}\,}
\def\arbreda{\,{\scalebox{0.15}{
\begin{picture}(8,204) (370,-99)
    \SetWidth{2}
    \SetColor{Black}
    \Line(374,-95)(374,-51)
    \Vertex(374,-48){9}
    \Vertex(375,-96){12}
    \Line(374,-44)(374,0)
    \Vertex(374,3){9}
    \Line(374,6)(374,50)
    \Vertex(374,53){9}
    \Line(374,53)(374,98)
    \Vertex(374,101){9}
  \end{picture}
}}\,}
\def\arbredb{\,{\scalebox{0.15}{
\begin{picture}(48,135) (349,-255)
    \SetWidth{2}
    \SetColor{Black}
    \Line(376,-163)(395,-128)
    \Line(373,-164)(354,-127)
    \Vertex(353,-124){9}
    \Vertex(395,-126){9}
    \Vertex(374,-164){9}
    \Line(374,-205)(374,-168)
    \Vertex(374,-207){9}
    \Line(374,-248)(374,-211)
    \Vertex(374,-252){12}
  \end{picture}
}}\,}
\def\arbredc{\,{\scalebox{0.15}{
 \begin{picture}(48,150) (349,-205)
    \SetWidth{2}
    \SetColor{Black}
    \Line(376,-148)(395,-113)
    \Line(373,-149)(354,-112)
    \Vertex(353,-109){9}
    \Vertex(395,-111){9}
    \Line(353,-108)(353,-61)
    \Vertex(353,-59){9}
    \Line(374,-200)(374,-153)
    \Vertex(374,-149){9}
    \Vertex(374,-202){12}
  \end{picture}
}}\,}
\def\arbredd{\,{\scalebox{0.15}{
 \begin{picture}(48,99) (349,-251)
    \SetWidth{2}
    \SetColor{Black}
    \Line(376,-199)(395,-164)
    \Line(373,-200)(354,-163)
    \Vertex(353,-160){9}
    \Vertex(395,-162){9}
    \Vertex(376,-156){9}
    \Vertex(376,-248){12}
    \Line(375,-245)(375,-204)
    \Line(375,-200)(375,-159)
    \Vertex(375,-201){9}
  \end{picture}
}}\,}
\def\arbrede{\,{\scalebox{0.15}{
 \begin{picture}(48,153) (349,-150)
    \SetWidth{2}
    \SetColor{Black}
    \Vertex(375,-147){12}
    \Line(376,-145)(395,-110)
    \Line(373,-146)(354,-109)
    \Vertex(353,-106){9}
    \Vertex(395,-108){9}
    \Line(353,-105)(353,-58)
    \Vertex(353,-56){9}
    \Line(353,-52)(353,-5)
    \Vertex(353,-1){9}
  \end{picture}
}}\,}
\def\arbredf{\,{\scalebox{0.15}{
\begin{picture}(48,98) (349,-205)
    \SetWidth{2}
    \SetColor{Black}
    \Vertex(375,-202){12}
    \Line(376,-200)(395,-165)
    \Line(373,-201)(354,-164)
    \Vertex(353,-161){9}
    \Vertex(395,-163){9}
    \Line(353,-160)(353,-113)
    \Vertex(353,-111){9}
    \Line(395,-159)(395,-112)
    \Vertex(395,-111){9}
  \end{picture}
}}\,}
\def\arbredz{\,{\scalebox{0.15}{
  \begin{picture}(68,88) (329,-215)
    \SetWidth{2}
    \SetColor{Black}
    \Vertex(375,-212){12}
    \Line(376,-210)(395,-175)
    \Line(373,-211)(354,-174)
    \Vertex(353,-171){9}
    \Vertex(395,-173){9}
    \Line(351,-168)(332,-131)
    \Line(355,-168)(374,-133)
    \Vertex(333,-131){9}
    \Vertex(374,-131){9}
  \end{picture}
}}\,}
\def\arbredg{\,{\scalebox{0.15}{
\begin{picture}(48,98) (349,-205)
    \SetWidth{2}
    \SetColor{Black}
    \Vertex(375,-202){12}
    \Line(376,-200)(395,-165)
    \Line(373,-201)(354,-164)
    \Vertex(353,-161){9}
    \Vertex(395,-163){9}
    \Line(375,-201)(375,-160)
    \Vertex(376,-157){9}
    \Vertex(376,-111){9}
    \Line(375,-155)(375,-114)
  \end{picture}
}}\,}
\def\arbredh{\,{\scalebox{0.15}{
 \begin{picture}(90,46) (330,-257)
    \SetWidth{2}
    \SetColor{Black}
    \Vertex(375,-254){12}
    \Line(376,-252)(395,-217)
    \Vertex(395,-215){9}
    \Line(374,-254)(335,-226)
    \Vertex(334,-224){9}
    \Line(375,-252)(356,-215)
    \Vertex(355,-215){9}
    \Line(374,-255)(417,-227)
    \Vertex(418,-225){9}
  \end{picture}
}}\,}
 \def\arbreeb{\,{\scalebox{0.15}{
 \begin{picture}(48,153) (349,-150)
    \SetWidth{2}
    \SetColor{Black}
    \Vertex(375,-147){12}
    \Line(375,-147)(395,-108)
    \Line(375,-147)(354,-108)
    \Vertex(353,-108){9}
    \Vertex(395,-108){9}
    \Vertex(395,-56){9}
    \Line(353,-108)(353,-56)
    \Line(395,-108)(395,-56)
    \Vertex(353,-56){9}
    \Line(353,-56)(353,-1)
    \Vertex(353,-1){9}
  \end{picture}
}}\,}
  \def\arbreec{\,{\scalebox{0.15}{
 \begin{picture}(48,153) (349,-150)
    \SetWidth{2}
    \SetColor{Black}
    \Vertex(375,-147){12}
    \Line(375,-147)(395,-108)
    \Line(375,-147)(354,-108)
    \Vertex(353,-108){9}
    \Vertex(395,-108){9}
    \Line(353,-108)(353,-56)
    \Vertex(353,-56){9}
    \Line(353,-56)(353,-10)
    \Vertex(353,-10){9}
    \Line(353,-10)(353,35)
    \Vertex(353,35){9}
  \end{picture}
}}\,}
\def\arbreed{\,{\scalebox{0.15}{
\begin{picture}(8,204) (370,-99)
    \SetWidth{2}
    \SetColor{Black}
    \Line(374,-95)(374,-51)
    \Vertex(374,-48){9}
    \Vertex(375,-96){12}
    \Line(374,-44)(374,0)
    \Vertex(374,3){9}
    \Line(374,6)(374,50)
    \Vertex(374,53){9}
    \Line(374,53)(374,95)
    \Vertex(374,95){9}
     \Line(374,95)(374,130)
    \Vertex(374,130){9}
  \end{picture}
}}\,}
\def\arbreee{\,{\scalebox{0.15}{
 \begin{picture}(48,99) (379,-251)
    \SetWidth{2}
    \SetColor{Black}
    \Line(376,-201)(395,-162)
    \Line(376,-201)(354,-162)
    \Vertex(353,-162){9}
    \Vertex(399,-162){9}
    \Vertex(400,-201){9}
    \Vertex(400,-248){12}
    \Line(400,-248)(376,-201)
    \Line(400,-248)(424,-201)
    \Line(400,-248)(400,-201)
    \Vertex(376,-201){9}
    \Vertex(424,-201){9}
  \end{picture}
}}\,}
\def\arbreef{\,{\scalebox{0.15}{
 \begin{picture}(48,150) (349,-205)
    \SetWidth{2}
    \SetColor{Black}
    \Line(374,-149)(395,-111)
    \Line(374,-149)(353,-111)
    \Vertex(353,-111){9}
    \Vertex(395,-111){9}
    \Line(353,-109)(334,-65)
    \Line(353,-109)(372,-65)
    \Vertex(334,-65){9}
    \Vertex(372,-65){9}
    \Line(374,-202)(374,-149)
    \Vertex(374,-149){9}
    \Vertex(374,-202){12}
  \end{picture}
}}\,}
\begin{document}
%%%%%%%%%%%%%%%%%%%%%%%%%%%%%%%%%%%%%%%%%%%%%
%%%%%%%%%%%%%%%%%%%%%%%%%%%%%%%%%%%%%%%%%%%%%

\title[Rooted trees, non-rooted trees and hamiltonian B-series]{Rooted trees, non-rooted trees and hamiltonian B-series}

\author{Geir bogfjellmo}
\address{NTNU}
	   \email{bogfjell@math.ntnu.no}
 	  \urladdr{}

\author{Charles Curry}
\address{Heriot Watt University}
         \email{chc5@hw.ac.uk}         
	  \urladdr{}

\author{Dominique Manchon}
\address{Universit\'e Blaise Pascal,
         C.N.R.S.-UMR 6620, BP 80026,
         63171 Aubi\`ere, France}       
         \email{manchon@math.univ-bpclermont.fr}
         \urladdr{http://math.univ-bpclermont.fr/~manchon/}

%%%%%%%%%%%%%%%%%%%%%%%%%%%%%%%%%%%%%%%%%%%%%%%%%%%%%%%%%%%%%%%%%%%
\date{January 17th 2013}
%%%%%%%%%%%%%%%%%%%%%%%%%%%%%%%%%%%%%%%%%%%%%%%%%%%%%%%%%%%%%%%%%%%

\begin{abstract}
We explore the relationship between (non-planar) rooted trees and free trees, i.e. without root. We give in particular, for non-rooted trees, a substitute for the Lie bracket given by the antisymmetrization of the pre-Lie product.

\bigskip

\noindent {\bf{Keywords}}: Rooted trees; B-series; Trees; Hamiltonian vector fields.

\smallskip

\noindent {\bf{Math. subject classification}}: 16W30; 05C05; 16W25; 17D25; 37C10 
\end{abstract}

%
%\begin{altabstract}
%\end{altabstract}
%

\maketitle

%%%%%%%%%%%%%%%%%%%%%%%%%%%%%%%%%%%%%%%%%%%%%%%%%%%%%%%%%%%%%%%%%%%

% \tableofcontents

%\newpage

%%%%%%%%%%%%%%%%%%%%%%%%%%%%%%%%%%%%%%%%%%%%%%%%%%%%%%%%%%%%%%%%%%%

\section{Introduction}
\label{sect:intro}
A striking link between rooted trees and vector fields on an affine space $\R^n$ has been established by A. Cayley \cite{Cay} as early as 1857. The interest for this correspondence has been renewed since J. Butcher showed the key role of rooted trees for understanding Runge-Kutta methods in numerical approximation \cite{Butcher1, Br, HWL}. The modern approach to this correspondence can be summarized as follows: the product on vector fields on $\R^n$ defined by:
\begin{equation}
\left(\sum_{i=1}^n f_i\partial_i\right)\rhd\left(\sum_{i=1}^n g_j\partial_j\right):=\sum_{j=1}^n\left(\sum_{i=1}^n f_i(\partial_ig_j)\right)\partial_j
\end{equation}
is left pre-Lie, which means that for any vector fields $a,b,c$ the associator $a\rhd(b\rhd c)-(a\rhd b)\rhd c$ is symmetric with respect to $a$ and $b$. On the other hand, the free pre-Lie algebra with one generator (on some base field $k$) is the vector space $\Cal T$ spanned by the planar rooted trees \cite{ChaLiv, DL}. The generator is the one-vertex tree $\racine$, and the pre-Lie product on rooted trees is given by grafting:
\begin{equation}
s\to t=\sum_{v\in \Cal V(t)}s\to_v t,
\end{equation}
where $s\to_v t$ is the rooted tree obtained by grafting the rooted tree $s$ on the vertex $v$ of the tree $t$. Hence for any vector field $a$ on $\R^n$ there exists a unique pre-Lie algebra morphism $\Cal F_a$ from $\Cal T$ to vector fields such that $\Cal F_a(\racine)=a$. This can be generalized to an arbitrary number of generators, since the free pre-Lie algebra on a set $D$ of generators is the span of rooted trees with vertices coloured by $D$. In this case, for any collection $\underline a=(a_d)_{d\in D}$ of vector fields, there exists a unique pre-Lie algebra morphism $\Cal F_{\underline a}$ from the linear span $\Cal T_D$ of coloured trees to vector fields on $\R^n$, such that $\Cal F_{\underline a}(\racine_d)=a_d$ for any $d\in D$.\\

The vector fields $\Cal F_a(t)$ (or $\Cal F_{\underline a}(t)$ in the coloured case) are the \textsl{elementary differentials}, building blocks of the B-series \cite{HWL} which are defined as follows: for any linear form $\alpha$ on $\Cal T_D\oplus\R\un$ where $\un$ is the empty tree, for any collection of vector fields $\underline a$ and for any initial point $y_0\in \R^n$,  the corresponding B-series\footnote{Such coloured B-series are sometimes called NB-series in the literature.} is a formal series in the indeterminate $h$ given by:
\begin{equation}
B_{\underline a}(\alpha,y_0)=\alpha(\un)y_0+\sum_{t\in\Cal T_D}h^{|t|}\frac{\alpha(t)}{\mop{sym}(t)}\Cal F_{\underline a}(t)(y_0).
\end{equation}
Here $|t|$ is the number of vertices of $t$, and $\mop{sym}(t)$ is its symmetry factor, i.e. the cardinal of its automorphism group $\mop{Aut} t$. For any vector field $a$, the exact solution of the differential equation:
\begin{equation}
\dot y(t)=a\big(y(t)\big)
\end{equation}
with initial condition $y(0)=y_0$ admits a (one-coloured) B-series expansion at time $t=h$, and its approximation by any Runge-Kutta method as well \cite{Butcher1, Butcher2, HWL}. The formal transformation $y_0\mapsto B_a(\alpha,y_0)$ is a formal series with coefficients in $C^\infty(\R^n,\R^n)$.\\

We will be interested in \textsl{canonical B-series} \cite{CS}, i.e. such that the formal transformation $B_{\underline a}(\alpha,-)$ is a symplectomorphism for any collection of hamiltonian vector fields $\underline a$. Here, the dimension $n=2r$ is even, and $\R^{2r}$ is endowed with the standard symplectic structure:
\begin{equation}
\omega(x,y)=\sum_{i=1}^r x_iy_{r+i}-x_{r+i}y_i,
\end{equation}
and a vector field $a=\sum_{i=1}^{2r}a_i\partial_i$ is hamiltonian if there exists a smooth map $H:\R^{2r}\to\R$ such that:
\begin{eqnarray*}
a_i&=&-\frac{\partial H}{\partial t_{i+r}}\hbox{ for }i=1,\ldots ,r, \hbox{ and}\\
a_i&=&\frac{\partial H}{\partial t_{i-r}}\hbox{ for }i=r+1,\ldots ,2r.
\end{eqnarray*}
Recall that the Poisson bracket of two smooth maps $f,g$ on $\R^{2r}$ is given by:
\begin{equation}\label{poisson-bracket}
\{f,g\}=\sum_{i=i}^r\frac{\partial f}{\partial t_i}\frac{\partial g}{\partial t_{i+r}}-\frac{\partial g}{\partial t_i}\frac{\partial f}{\partial t_{i+r}}.
\end{equation}
Hence hamiltonian vector fields are those vector fields $a$ which can be expressed as:
$$a=\{H,-\}$$
for some $H\in C^\infty(\R^{2r})$. A B-series turns out to be canonical if and only if the following condition holds for any rooted trees $s$ and $t$ \cite[Theorem 2]{AMS}:
\begin{equation}
\alpha(s\circ t)+\alpha(t\circ s)=\alpha(s)\alpha(t),
\end{equation}
where $s\circ t$ is the right Butcher product, defined by grafting the tree $t$ on the root of the tree $s$. This result is also valid in the coloured case. The infinitesimal counterpart of this result expresses as follows (\cite{HWL}, Theorem IX.9.10 for one-colour case): a B-series $B_{\underline a}(\alpha,-)$ with $\alpha(\un)=0$ defines a hamiltonian vector field for any hamiltonian vector field $a$ if and only if:
\begin{equation}\label{hbs}
\alpha(s\circ t)+\alpha(t\circ s)=0.
\end{equation}
Let us call the B-series of the type described above \textsl{hamiltonian B-series}. Our interest in non-rooted trees comes from the following elementary observation: the two rooted trees $s\circ t$ and $t\circ s$ are equal as non-rooted trees, and one is obtained from the other by shifting the root to a neighbouring vertex. As an easy consequence of \eqref{hbs}, any hamiltonian B-series $B_{\underline a}(\alpha,-)$ has to satisfy that if two rooted trees $s$ and $t$ are equal as non-rooted trees, then:
\begin{equation}
\alpha(s)=\pm\alpha(t).
\end{equation}
This implies that, modulo a careful account of the signs involved, hamiltonian B-series are naturally indexed by non-rooted trees rather than by rooted ones. The sign is plus or minus according to the parity of the minimal number of ''root shifts" $s_1\circ s_2\mapsto s_2\circ s_1$ that are required to change $s$ into $t$.\\

In the present paper we address the following question: \textsl{what survives from the pre-Lie structure at the level of non-rooted trees?} There is a natural linear map $\wt X$ from non-rooted trees to (the linear span of) rooted trees, sending a tree to the sum of all its rooted representatives, with alternating signs. Its precise definition involves a total order on rooted trees introduced by A. Murua \cite{M}. We propose a binary product $\diamond$ on the linear span of non-rooted trees, which is roughly speaking an alternating sum of all trees obtained by linking a vertex of the first tree with a vertex of the second tree. Theorem \ref{main} is the key result of the paper. It implies the fact that $\diamond$ is a Lie bracket and that $\wt X$ is a Lie algebra morphism, the Lie bracket on rooted trees being given by antisymmetrizing the pre-Lie product.\\

\noindent
\textbf{Acknowledgements :} This article came out from a workshop in December 2012 at NTNU in Trondheim. The authors thank  Elena Celledoni, Kurusch Ebrahimi-Fard, Brynjulf Owren and all the participants for illuminating discussions. The third author also thanks Ander Murua and Jesus Sanz-Serna for sharing references and for their encouragements. This work is partly supported by Campus France, PHC Aurora 24678ZC. The third author also acknowledges a support by Agence Nationale de la Recherche (projet CARMA).
%%%%%
\section{Structural facts about non-rooted trees}
%%%%%
We denote by $T$ (resp. $FT$) the set of non-planar rooted (resp. non-rooted) trees. We denote by $\Cal T$ (resp. $\Cal {FT}$) the vector spaces freely generated by $T$ (resp. $FT$). The projection $\pi:T\to\!\!\!\!\!\to FT$ is defined by forgetting the root. It extends linearly to $\pi:\Cal T \to\!\!\!\!\!\to\Cal {FT}$. Rooted trees will be denoted by latin letters $s,t,\ldots$, non-rooted trees by greek letters $\sigma,\tau,\ldots$. We will also use "free tree" as a synonymous for "non-rooted tree". For any free tree $\tau$ and for any vertex $v$ of $\tau$, we denote by $\tau_v$ the unique rooted tree built from $\tau$ by putting the root at $v$.
%%%
\subsection{A total order on rooted trees}
%%%
Recall that any rooted tree $t$ is obtained by grafting rooted trees $t_1,\ldots,t_q$ on a common root:
$$t=B_+(t_1,\ldots,t_q).$$
The trees $t_j$ are called the \textsl{branches} of $t$. A. Murua defines in \cite{M} a total order on the set of (one-colour) rooted trees in a recursive way as follows: the \textsl{canonical decomposition} of a tree $t$ is given by $t=t_L\circ t_R$ where $t_R$ is the maximal branch of $t$. The maximality is to be understood with respect to the total order, supposed to be already defined for trees with number of vertices strictly smaller than $|t|$. Then $s<t$ if and only if:
\begin{itemize}
\item either $|s|<|t|$,
\item or $|s|=|t|$ and $s_L<t_L$,
\item or $|s|=|t|$, $s_L=t_L$ and $s_R<t_R$.
\end{itemize}
In the one-colour case, the total order of the first few trees is:
$$\racine<\arbrea<\arbreba<\arbrebb<\arbreca<\arbrecb<\arbrecc<\arbrecd<\arbreda<\arbredb<\arbredc<\arbredd<\arbrede<\arbredz<\arbredf<\arbredg<\arbredh<\cdots$$
If we prescribe a total order on the set of colours $D$ and allow the set of one node coloured trees to inherit this order, incorporating this into the definition above gives a total order on the set of coloured rooted trees. Note that the structure of the one-colour order is not entirely preserved, as, for example, for two colours $\racine <\racinebis$ we have $\arbrecabis >\arbrecbbis$ whereas $\arbreca < \arbrecb$.\\

%%%
\subsection{Superfluous trees}
%%%
This notion has been introduced in \cite{AS}, where the authors describe order conditions for canonical B-series coming from Runge-Kutta approximation methods. Let $B_{\underline a}(\alpha,-)$ be a hamiltonian B-series. According to \eqref{hbs}, we have $\alpha(t\circ t)=0$ for any rooted tree $t$. Any non-rooted tree $\tau$ such that there exists a rooted tree $s$ with $s\circ s\in\pi^{-1}(\tau)$ is called a \textsl{superfluous tree}, and a rooted tree $t$ is said to be superfluous if its underlying free tree $\pi(t)$ is. Such trees never appear in a hamiltonian B-series. For any free tree $\tau\in FT$, its \textsl{canonical representative} is the maximal element of the set $\pi^{-1}(\tau)\subset T$ for the total order above. The following lemma gives a characterization of superfluous trees:
\begin{lem}\label{lem:superfluous}
Let $\tau\in FT$ have two distinct vertices $v$ and $w$ such that $\tau_v=\tau_w$ is the canonical representative of $\tau$. Then:
\begin{enumerate}
\item $v$ and $w$ are the two ends of a common edge in $\tau$,
\item There exists $s\in T$ such that $\tau_v=\tau_w=s\circ s$.
\end{enumerate} 
\end{lem}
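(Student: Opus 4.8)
The plan is to isolate a single \emph{local comparison lemma} describing, for an edge $\{x,y\}$ of $\tau$, how the two rootings $\tau_x$ and $\tau_y$ compare in Murua's order, and then to feed this into a short counting argument. Removing the edge $\{x,y\}$ splits $\tau$ into the component of $x$, which we root at $x$ and call $A$, and the component of $y$, rooted at $y$ and called $B$; by definition of the Butcher product one has $\tau_x=A\circ B$ and $\tau_y=B\circ A$. Since $|A\circ B|=|A|+|B|=|B\circ A|$, the comparison drops to the level of canonical decompositions, and the key observation is that in $A\circ B=B_+(\ldots\text{branches of }A\ldots,B)$ the grafted factor $B$ is, whenever $|A|\le|B|$, the \emph{unique} branch of maximal size, hence the maximal branch; consequently $(A\circ B)_R=B$ and $(A\circ B)_L=A$. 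Comparing the $L$-parts (and, when $|A|\neq|B|$, comparing sizes directly) yields the clean conclusion
$$\tau_x<\tau_y\iff A<B,\qquad \tau_x=\tau_y\iff A=B.$$
I expect this lemma, rather than either part of the statement, to be the real content; note that it is insensitive to colours, since only sizes and the order intervene.

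Granting the lemma, the first assertion follows from a counting argument. Let $n=|\tau|$ and let $v=u_0,u_1,\ldots,u_k=w$ be the path from $v$ to $w$. Because $\tau_v$ and $\tau_w$ are the global maximum, $\tau_v\ge\tau_{u_1}$ and $\tau_w\ge\tau_{u_{k-1}}$. Applying the lemma to the first edge $\{u_0,u_1\}$ gives $A\ge B$, where $A$ is the $v$-side component; as the order refines size this forces $|A|\ge n/2$. Symmetrically, the $w$-side component $A'$ of the last edge $\{u_{k-1},u_k\}$ satisfies $|A'|\ge n/2$. If $v$ and $w$ were not adjacent, i.e. $k\ge2$, then $A$ and $A'$ would be disjoint and would moreover miss the interior vertices $u_1,\ldots,u_{k-1}$, so that $|A|+|A'|\le n-(k-1)\le n-1$, contradicting $|A|+|A'|\ge n$. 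Hence $k=1$ and $v,w$ are the two ends of a common edge.

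The second assertion is then immediate from the equality case of the lemma. Writing $e=\{v,w\}$ and letting $A,B$ be the two sides rooted at $v$ and $w$ respectively, we have $\tau_v=A\circ B$ and $\tau_w=B\circ A$; the hypothesis $\tau_v=\tau_w$ reads $A\circ B=B\circ A$, which forces $A=B=:s$, whence $\tau_v=\tau_w=s\circ s$.

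The main obstacle is thus entirely concentrated in the local comparison lemma: one must verify that grafting the larger (or equal-size) of the two pieces always produces the maximal branch, so that the recursive comparison in Murua's order collapses to a comparison of the pieces themselves. The two delicate points — the borderline case $|A|=|B|$ and the passage to coloured trees — are both absorbed by the same size bookkeeping, since a strictly larger branch is maximal irrespective of the colouring.
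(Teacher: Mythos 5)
Your proof is correct, but it is organized quite differently from the paper's. The paper argues directly on the canonical decomposition of $\tau_v$: it first shows that the maximal branch of $\tau_v$ must contain $w$ (by comparing sizes of branches of $\tau_v$ and $\tau_w$), and then, assuming a vertex $x$ strictly between $v$ and $w$, observes that the maximal branch of $\tau_x$ is a \emph{proper} subtree of that of $\tau_v$, so that $(\tau_x)_L$ has strictly more vertices than $(\tau_v)_L$ and hence $\tau_x>\tau_v$ --- contradicting maximality; assertion (2) then drops out because the canonical decompositions of $\tau_v=A\circ B$ and $\tau_w=B\circ A$ must coincide. You instead isolate a local edge-comparison lemma ($\tau_x\le\tau_y$ across an edge iff $A\le B$, with equality iff $A=B$) and combine it with a counting argument ($|A|,|A'|\ge n/2$ forces adjacency, since for $k\ge 2$ the two sides are disjoint and miss the interior path vertices). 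Both routes are sound; yours has the advantage that the local lemma is a reusable statement in its own right (it essentially locates the canonical root at a centroid of the free tree and shows the rooting map has no spurious local maxima), at the price of one verification you only gesture at: in the case $|A|<|B|$ you must check that $(B\circ A)_R$ has at most $|B|-1$ vertices (every branch of $B\circ A$ is either $A$, of size $|A|\le|B|-1$, or a branch of $B$, of size $\le|B|-1$), so that $(B\circ A)_L$ is strictly larger in size than $(A\circ B)_L=A$ and the comparison of $L$-parts goes the right way. Your remark that the whole argument survives colouring because Murua's order refines the vertex count in the coloured case as well is accurate and worth keeping.
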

\begin{proof}
First of all, the maximal branch of $\tau_v$ contains $w$ (and vice-versa). Indeed, Suppose the maximal branch of $\tau_v$ does not contain $w$ (and hence vice-versa). Let
$$ \tau_v = B^+(t_1,t_2,\ldots,t_n,t_w,t_{\mathrm{max}}),
\hskip 10mm \tau_w = B^+(t'_1,t'_2,\ldots,t'_n,t'_v,t'_{\mathrm{max}}),$$
where $t_w$ is the branch of $\tau_v$ containing $w$ and $t'_v$ similarly. It is clear that $t'_v$ contains all branches of $\tau_v$ except $t_w$. Hence $|t'_v|>|t_1|+\ldots +|t_n|+|t_{\mathrm{max}}|$ and as $|t_{\mathrm{max}}|=|t'_{\mathrm{max}}|$
we have $|t'_v|>|t'_{\mathrm{max}}|$, a contradiction. Now suppose that $v$ and $w$ are not neighbours, and choose a vertex $x$ between $v$ and $w$, i.e. such that there is a path from $v$ to $w$ of meeting $x$. The maximal branch of $\tau_x$ cannot contain both $v$ and $w$; suppose it does not contain $v$. Then it is a subtree of the maximal branch $\tau_v$ and hence contains strictly less vertices. Looking at the canonical decompositions:
$$t:=\tau_v=\tau_w=t_L\circ t_R,\hskip 10mm t':=\tau_x=t'_L\circ t'_R,$$
we have then $|t'_L|>|t_L|$, which immediately yields $\tau_x>\tau_v$, which is a contradiction. This proves the first assertion, and the second assertion follows immediately.
\end{proof}

\noindent
There are four superfluous free trees with six vertices or less.  The corresponding superfluous rooted trees are:
$$\arbrea,\hskip 12mm \arbreca, \arbrecc, \hskip 12mm \arbreed, \arbreec, \arbreeb,\hskip 12mm  \arbreee, \arbreef.$$
We denote by $S$ the set of superfluous free trees and by $FT'$ the set of non-superfluous trees, hence $FT=FT'\amalg S$. The corresponding linear spans will be denoted by $\Cal S$ and $\Cal{FT}'$. We have $\Cal{FT}=\Cal S\oplus\Cal {FT}'$, which leads to a linear isomorphism:
$$\Cal {FT}'\sim\Cal{FT}/\Cal S.$$
%%%
\subsection{Symmetries}
%%%
We keep the notations of the previous subsection. For any non-superfluous tree $\tau\in FT'$ we denote by $*$ the unique vertex such that $\tau_*$ is the canonical representative of $\tau$. The group of automorphisms of $\tau$ is the subgroup $\mop{Aut}\tau$ of the group of permutations $\varphi$ of $\Cal V(\tau)$ which respect the tree structure, i.e. such that, for any $v,w\in\Cal V(\tau)$, there is an edge between $v$ and $w$ if and only if there is an edge between $\varphi(v)$ and $\varphi(w)$.\\

For any rooted tree $t$ we also denote by $\mop{Aut} t$ its group of automorphisms, i.e. the subgroup of the group of permutations $\varphi$ of $\Cal V(t)$ which respect the rooted tree structure. It obviously coincides with the stabilizer of the root in $\mop{Aut} \pi(t)$. Now for any non-superfluous free tree $\tau$ it is obvious from Lemma \ref{lem:superfluous} that $\mop{Aut}\tau$ fixes the vertex $*$, hence $\mop{Aut}\tau=\mop{Aut}\tau_*$.\\

Now $\mop{Aut}\tau$ acts on the set of vertices $\Cal V(\tau)$. Moreover, for any vertex $v$ this group acts transitively on the subset of possible roots for $\tau_v$, namely:
$$\Cal R_v(\tau):=\{w\in\Cal V(\tau),\, \tau_w\sim\tau_v\}.$$
Hence $R_v(\tau)$ identifies itself with the homogeneous space:
\begin{equation}
R_v(\tau)\sim\mop{Aut}\tau_*/\mop{Aut}\tau_v.
\end{equation}
This immediately leads to the following proposition, which is implicit in the proof of Lemma IX.9.7 in \cite{HWL}:
\begin{prop}\label{prop:sym}
Let $\tau$ be a non-superfluous free tree, let $t$ be a rooted tree such that $\pi(t)=\tau$, and let $N(t,\tau)$ be the number of vertices $v\in\Cal V(\tau)$ such that $\tau_v=t$. Then:
\begin{equation}
N(t,\tau)=\frac{\mop{sym}(\tau_*)}{\mop{sym}( t)}.
\end{equation}
\end{prop}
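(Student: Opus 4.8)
The plan is to apply the orbit–stabilizer principle to the transitive action of $\mop{Aut}\tau$ on the set $\Cal R_v(\tau)$ that was described just above the statement. The first step is to identify $N(t,\tau)$ with the cardinality of such an orbit. Since $\pi(t)=\tau$, there is at least one vertex $v_0$ with $\tau_{v_0}=t$, and by definition $\Cal R_{v_0}(\tau)=\{w\in\Cal V(\tau):\tau_w\sim\tau_{v_0}\}$ is exactly the set of vertices $w$ for which $\tau_w=t$ as a rooted tree. Hence $N(t,\tau)=|\Cal R_{v_0}(\tau)|$.

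Next I would invoke the homogeneous-space identification $\Cal R_{v_0}(\tau)\sim\mop{Aut}\tau_*/\mop{Aut}\tau_{v_0}$ already established. The stabilizer of $v_0$ under the action of $\mop{Aut}\tau=\mop{Aut}\tau_*$ consists precisely of the automorphisms of $\tau$ fixing $v_0$, which is by definition $\mop{Aut}\tau_{v_0}$. Taking cardinalities gives
$$N(t,\tau)=|\Cal R_{v_0}(\tau)|=\frac{|\mop{Aut}\tau_*|}{|\mop{Aut}\tau_{v_0}|}=\frac{\mop{sym}(\tau_*)}{\mop{sym}(\tau_{v_0})}.$$
Finally, since $\tau_{v_0}=t$ as rooted trees we have $\mop{Aut}\tau_{v_0}\cong\mop{Aut} t$, whence $\mop{sym}(\tau_{v_0})=\mop{sym}(t)$, which yields the claimed formula.

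The only genuine input beyond orbit–stabilizer is the pair of facts recorded just before the statement, namely that $\mop{Aut}\tau$ fixes $*$ so that $\mop{Aut}\tau=\mop{Aut}\tau_*$, and that the action on each $\Cal R_v(\tau)$ is transitive. Consequently there is no serious obstacle here: the main point to verify carefully is the bookkeeping in the first step, ensuring that the orbit $\Cal R_{v_0}(\tau)$ really coincides with the set of vertices enumerated by $N(t,\tau)$ and is counted with the correct multiplicity. Once this is granted, the proposition is essentially a restatement of the homogeneous-space identification in terms of symmetry factors.
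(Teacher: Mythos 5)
Your proposal is correct and follows exactly the route the paper intends: the paper states the proposition as an immediate consequence of the transitivity of the $\mop{Aut}\tau$-action on $\Cal R_v(\tau)$ and the identification $\Cal R_v(\tau)\sim\mop{Aut}\tau_*/\mop{Aut}\tau_v$, which is precisely the orbit--stabilizer argument you spell out. Your extra care in identifying $N(t,\tau)$ with $|\Cal R_{v_0}(\tau)|$ is sound bookkeeping, not a deviation.
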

%%%
\subsection{Grafting and linking}
%%%
Let $\sigma$ and $\tau$ be two non-rooted trees, and let us choose a vertex $v$ of $\sigma$ and a vertex $w$ of $\tau$. We will denote by $\sigma\link{v}{w}\tau$ the non-rooted tree obtained by taking $\sigma$ and $\tau$ together and adding a new edge between $v$ and $w$. This linking operation is related to grafting of rooted trees as follows: for any other choice of vertices $x$ of $\sigma$ and $y$ of $\tau$ we have:
\begin{eqnarray}
(\sigma\link{v}{w}\tau)_y&=&\sigma_v\to_w\tau_y,\label{gl1}\\
(\sigma\link{v}{w}\tau)_x&=&\tau_w\to_v\sigma_x.\label{gl2}
\end{eqnarray}
%%%%%
\section{A binary operation on non-rooted trees}
%%%%%
The linear map $\wt X:\Cal{FT}\to\Cal T$ is defined for any non-rooted tree $\tau$ by:
\begin{equation}
\wt X(\tau)=\sum_{v\in\Cal V(\tau)}\varepsilon(v,\tau)\tau_v \label{kappa},
\end{equation}
and extended linearly. Here $\varepsilon(v,\tau)$ is equal to $0$ if $\tau$ is superfluous, and is equal to $1$ (resp. $-1$) if $\tau$ is not superfluous and if the number of requested root shifts to change $\tau_v$ into the canonical representative of $\tau$ is even (resp. odd). This number, which we denote by $\kappa(v,\tau)$, is indeed unambiguous for non-superfluous trees according to Lemma \ref{lem:superfluous}. We obviously have:
\begin{equation}\label{invkappa}
\varepsilon(v,\tau)=\varepsilon\big(\varphi(v),\tau\big)
\end{equation}
for any $\varphi\in\mop{Aut}\tau$. The introduction of the map $\wt X$ is justified by the fact that, according to \eqref{hbs}, \eqref{invkappa} and Proposition \ref{prop:sym}, rooted trees involved in hamiltonian B-series do group themselves under terms $\wt X(\tau)$ with $\tau\in FT$. Indeed,
\begin{prop}
\begin{equation}
B_{\underline a}(\alpha,-)=\sum_{\tau\in FT}h^{|\tau|}\frac{\alpha(\tau_*)}{\mop{sym}(\tau_*)}\Cal F_{\underline a}\big(\wt X(\tau)\big).
\end{equation}
\end{prop}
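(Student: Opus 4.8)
The plan is to start from the very definition of the hamiltonian B-series and simply re-index the sum over rooted trees by the underlying free trees. Since $B_{\underline a}(\alpha,-)$ is hamiltonian we have $\alpha(\un)=0$, so
$$B_{\underline a}(\alpha,-)=\sum_{t\in T}h^{|t|}\frac{\alpha(t)}{\mop{sym}(t)}\Cal F_{\underline a}(t).$$
The projection $\pi$ partitions $T$ into the fibres $\pi^{-1}(\tau)$, $\tau\in FT$, and $|t|=|\tau|$ is constant on each fibre. First I would rewrite the right-hand side as $\sum_{\tau\in FT}h^{|\tau|}\sum_{t\in\pi^{-1}(\tau)}\frac{\alpha(t)}{\mop{sym}(t)}\Cal F_{\underline a}(t)$ and prove the proposition fibre by fibre, i.e. show that the inner sum equals $\frac{\alpha(\tau_*)}{\mop{sym}(\tau_*)}\Cal F_{\underline a}(\wt X(\tau))$ for each $\tau$.

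The key input is the effect of a root shift on $\alpha$. Moving the root of $\tau_v$ to an adjacent vertex $w$ is exactly a Butcher transposition $t_L\circ t_R\mapsto t_R\circ t_L$ (where $t_R$ is the branch hanging at $w$), so by \eqref{hbs} each such shift multiplies $\alpha$ by $-1$; iterating along the unique path from $v$ to the distinguished vertex $*$ gives $\alpha(\tau_v)=\varepsilon(v,\tau)\alpha(\tau_*)$, with $\varepsilon(v,\tau)=(-1)^{\kappa(v,\tau)}$ as in the definition of $\wt X$. Next I would observe that $\varepsilon(v,\tau)$, $\mop{sym}(\tau_v)$ and $\Cal F_{\underline a}(\tau_v)$ depend on $v$ only through the isomorphism class of $\tau_v$: for $\mop{sym}$ and $\Cal F_{\underline a}$ this is immediate, and for $\varepsilon$ it follows from \eqref{invkappa} together with the transitivity of $\mop{Aut}\tau$ on each set $\Cal R_v(\tau)$. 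Hence I may collect, for each $t\in\pi^{-1}(\tau)$, the $N(t,\tau)$ vertices $v$ with $\tau_v=t$, writing $\varepsilon(t)$ for their common value.

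Unfolding $\Cal F_{\underline a}(\wt X(\tau))=\sum_{v\in\Cal V(\tau)}\varepsilon(v,\tau)\Cal F_{\underline a}(\tau_v)$ and grouping equal isomorphism classes then yields, via Proposition \ref{prop:sym},
$$\Cal F_{\underline a}(\wt X(\tau))=\sum_{t\in\pi^{-1}(\tau)}N(t,\tau)\,\varepsilon(t)\,\Cal F_{\underline a}(t)=\mop{sym}(\tau_*)\sum_{t\in\pi^{-1}(\tau)}\frac{\varepsilon(t)}{\mop{sym}(t)}\Cal F_{\underline a}(t).$$
Dividing by $\mop{sym}(\tau_*)$, multiplying by $\alpha(\tau_*)$, and finally substituting $\varepsilon(t)\alpha(\tau_*)=\alpha(t)$ from the root-shift identity turns the right-hand side into exactly the inner sum $\sum_{t\in\pi^{-1}(\tau)}\frac{\alpha(t)}{\mop{sym}(t)}\Cal F_{\underline a}(t)$, which proves the fibrewise claim and hence the proposition after summation over $\tau$.

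The one point needing care is the superfluous fibres, where $\wt X(\tau)=0$ and where $\tau_*$ is not the unique maximal representative. There I would argue that $\alpha$ vanishes on the whole fibre: some representative has the form $s\circ s$, on which \eqref{hbs} forces $\alpha(s\circ s)=0$, and the sign-only root-shift identity propagates this to every $t\in\pi^{-1}(\tau)$. Thus both sides contribute $0$ on superfluous fibres and the identity holds there trivially. I expect this bookkeeping around superfluous trees — verifying that the $\pm$ ambiguity in $\tau_*$ never matters because the attached coefficient is already zero — to be the only genuinely delicate step; everything else is the re-indexing made rigorous by Proposition \ref{prop:sym} and the linearity of $\Cal F_{\underline a}$.
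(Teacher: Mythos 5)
Your proof is correct and follows exactly the route the paper intends: the paper states this proposition without a written proof, merely citing \eqref{hbs}, \eqref{invkappa} and Proposition \ref{prop:sym}, and your argument is precisely the fibrewise re-indexing over $\pi^{-1}(\tau)$ that combines those three ingredients, including the correct observation that superfluous fibres contribute zero on both sides. The only minor imprecision is that $\tau_*$ as a rooted tree is always the unique maximal element of its fibre (only the realizing vertex may be non-unique), but as you note this is harmless since the coefficient $\alpha(\tau_*)$ vanishes there anyway.
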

\noindent
Now let us define a binary product on $\Cal {FT}$ by the formula:
\begin{equation}
\sigma\diamond\tau=\sum_{v\in\Cal V(\sigma),\,w\in\Cal V(\tau)}\delta(v,w)\sigma\link{v}{w}\tau,
\end{equation}
with $\delta(v,w):=\varepsilon(w, \sigma\link{v}{w}\tau)\varepsilon(v,\sigma)\varepsilon(w,\tau)$.
\begin{thm}\label{main}
We have $\sigma\diamond\tau\in\Cal{FT}'$ for any $\sigma,\tau\in\Cal{FT}$, and $\sigma\diamond\tau=0$ if $\sigma$ or $\tau$ is superfluous. The product $\diamond$ is antisymmetric, and the following relation holds:
\begin{equation}
\wt X(\sigma\diamond\tau)=\wt X(\sigma)\to\wt X(\tau)-\wt X(\tau)\to\wt X(\sigma)=[\wt X(\sigma),\, \wt X(\tau)].
\end{equation}
\end{thm}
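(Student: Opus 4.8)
The plan is to reduce the whole statement to one sign-bookkeeping lemma and then to match the two sides coefficient by coefficient over a common index set. The basic tool is a \emph{sign-distance lemma}: for a non-superfluous free tree $\rho$ and any two vertices $a,b\in\Cal V(\rho)$, I claim $\varepsilon(a,\rho)\varepsilon(b,\rho)=(-1)^{d(a,b)}$, where $d$ is the graph distance in $\rho$. Indeed a root shift moves the root across a single edge, so by Lemma \ref{lem:superfluous} (which makes the canonical vertex $*$ unique in the non-superfluous case) one has $\kappa(a,\rho)\equiv d(a,*)\pmod 2$; since a tree is bipartite, $d(a,*)+d(b,*)\equiv d(a,b)\pmod 2$ and the dependence on $*$ cancels. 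Granting this, the first three assertions are immediate from the shape of $\delta$. The factor $\varepsilon(w,\sigma\link{v}{w}\tau)$ kills every superfluous join, giving $\sigma\diamond\tau\in\Cal{FT}'$; the factors $\varepsilon(v,\sigma)\varepsilon(w,\tau)$ vanish whenever $\sigma$ or $\tau$ is superfluous; and antisymmetry holds because $v,w$ are the two ends of the new edge, so $\varepsilon(v,\rho_{vw})\varepsilon(w,\rho_{vw})=(-1)^{d(v,w)}=-1$, and the coefficient of the same free tree in $\tau\diamond\sigma$ differs from that in $\sigma\diamond\tau$ exactly by replacing $\varepsilon(w,\rho_{vw})$ with $\varepsilon(v,\rho_{vw})$.

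For the Lie-morphism identity I would expand both sides over the common index set $I=\{(v,w,z):v\in\Cal V(\sigma),\ w\in\Cal V(\tau),\ z\in\Cal V(\sigma)\sqcup\Cal V(\tau)\}$, each index contributing the rooted tree $(\sigma\link{v}{w}\tau)_z$. On the left, linearity of $\wt X$ and the definition of $\diamond$ produce the coefficient $c^{L}(v,w,z)=\delta(v,w)\,\varepsilon(z,\rho_{vw})$ with $\rho_{vw}:=\sigma\link{v}{w}\tau$. On the right, the grafting–linking relations \eqref{gl1} and \eqref{gl2} rewrite $\wt X(\sigma)\to\wt X(\tau)$ and $\wt X(\tau)\to\wt X(\sigma)$ as sums of the very same rooted trees $(\rho_{vw})_z$, with coefficient $\varepsilon(v,\sigma)\varepsilon(z,\tau)$ when $z\in\Cal V(\tau)$ and $-\varepsilon(z,\sigma)\varepsilon(w,\tau)$ when $z\in\Cal V(\sigma)$. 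Applying the sign-distance lemma inside $\sigma$ and inside $\tau$, both cases collapse to the single clean expression $c^{R}(v,w,z)=-\varepsilon(v,\sigma)\varepsilon(w,\tau)(-1)^{d(v,z)}$, the distance now taken in $\rho_{vw}$. When $\rho_{vw}$ is non-superfluous, the lemma applied inside $\rho_{vw}$ together with the adjacency of $v$ and $w$ turns $c^{L}$ into the identical expression, so $c^{L}=c^{R}$ index by index and the two sides already agree on every non-superfluous rooted tree.

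The genuine obstacle is the superfluous part. When $\rho_{vw}$ is superfluous one has $c^{L}(v,w,z)=0$, while $c^{R}$ need not vanish, because the grafting expansions on the right do see superfluous joins. I must therefore show that for each such pair the block $\sum_{z}c^{R}(v,w,z)(\rho_{vw})_z$ is already zero. Here I would invoke the central symmetry furnished by Lemma \ref{lem:superfluous}: a superfluous $\rho$ has a central edge whose endpoint-swap $\phi$ is a free-tree automorphism, fixed-point-free on vertices, sending one endpoint of the central edge to its neighbour and hence interchanging the two colour classes of the bipartite tree. Since $\phi$ preserves distances while reversing colours, $d(v,z)$ and $d(v,\phi(z))$ have opposite parity for every $z$, whereas $\rho_{\phi(z)}\cong\rho_z$ as rooted trees. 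Pairing $z$ with $\phi(z)$ then cancels the terms of $\sum_z(-1)^{d(v,z)}\rho_z$ two by two, so the whole superfluous block dies. Combining the three regimes — superfluous factors $\sigma,\tau$, non-superfluous joins, and superfluous joins — yields $\wt X(\sigma\diamond\tau)=[\wt X(\sigma),\wt X(\tau)]$.

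I expect the sign-distance lemma and this superfluous cancellation to carry all the content; everything else is careful but routine tracking of \eqref{gl1}–\eqref{gl2}. The delicate point to present correctly is precisely that the identity does \emph{not} hold term by term over all of $I$: the right-hand side carries extra superfluous contributions absent from the left, and these survive only until the colour-reversing central swap annihilates them.
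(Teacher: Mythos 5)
Your proof is correct and follows essentially the same route as the paper: the same expansion over pairs $(v,w)$ and the same case split (superfluous factors, superfluous join, non-superfluous join), with your sign-distance lemma $\varepsilon(a,\rho)\varepsilon(b,\rho)=(-1)^{d(a,b)}$ being exactly the paper's observation that $\kappa(x,\sigma)+\kappa(v,\sigma)\equiv d(x,v)\pmod 2$. The one place you go beyond the paper is the superfluous-join case: where the paper simply asserts that $\sum_{z}(-1)^{d(z,w)}(\sigma\link{v}{w}\tau)_z$ vanishes because the join is superfluous, you supply the actual reason --- the fixed-point-free, colour-class-reversing involution across the central edge of $\pi(s\circ s)$ --- and that argument is valid.
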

\begin{proof}
A computation of the left-hand side gives:
\begin{eqnarray*}
\wt X(\sigma\diamond\tau)&=&\sum_{v,x\in\Cal V(\sigma),\,w\in\Cal V(\tau)}
	\varepsilon(x,\sigma\link{v}{w}\tau)\varepsilon(w,\sigma\link{v}{w}\tau)\varepsilon(v,\sigma)\varepsilon(w,\tau)(\sigma\link{v}{w}\tau)_x\\
	&+&\sum_{v\in\Cal V(\sigma),\,w,y\in\Cal V(\tau)}
	\varepsilon(y,\sigma\link{v}{w}\tau)\varepsilon(w,\sigma\link{v}{w}\tau)\varepsilon(v,\sigma)\varepsilon(w,\tau)(\sigma\link{v}{w}\tau)_y,
\end{eqnarray*}
and computing the right-hand side gives:
\begin{eqnarray*}
[\wt X(\sigma),\,\wt X(\tau)]=&-&\sum_{v,x\in\Cal V(\sigma),\,w\in\Cal V(\tau)}
	\varepsilon(v,\sigma)\varepsilon(w,\tau)\tau_w\to_x\sigma_v\\
	&+&\sum_{v\in\Cal V(\sigma),\,w,y\in\Cal V(\tau)}
	\varepsilon(v,\sigma)\varepsilon(w,\tau)\sigma_v\to_y\tau_w.
\end{eqnarray*}
Exchanging $x$ and $v$ in the first sum, and $y$ and $w$ in the second, we get:
\begin{eqnarray*}
[\wt X(\sigma),\,\wt X(\tau)]=&-&\sum_{v,x\in\Cal V(\sigma),\,w\in\Cal V(\tau)}
	\varepsilon(x,\sigma)\varepsilon(w,\tau)\tau_w\to_v\sigma_x\\
	&+&\sum_{v\in\Cal V(\sigma),\,w,y\in\Cal V(\tau)}
	\varepsilon(v,\sigma)\varepsilon(y,\tau)\sigma_v\to_w\tau_y.
\end{eqnarray*}
The first assertion is immediate since $\varepsilon(w,\,\sigma\slink vw\tau)$ vanishes if $\sigma\link vw\tau$ is superfluous. The second assertion is also immediate, since $\delta(v,w)$ vanishes if $\sigma$ or $\tau$ is superfluous. The antisymmetry comes from the fact that $v$ and $w$ are neighbours in $\sigma\link{v}{w}\tau$.\\
\begin{enumerate}
\item
If $\sigma$ or $\tau$ is superfluous, any individual term in both sides vanishes. 
\item
If $\sigma$ and $\tau$ are not superfluous it may happen that $\sigma\link vw \tau$ is superfluous for some $v\in \Cal V(\sigma)$ and $w\in\Cal V(\tau)$. The corresponding term $\wt X(\sigma\link vw\tau)$ in $\wt X(\sigma\diamond\tau)$ vanishes. On the other hand, the sum of all terms in $[\wt X(\sigma),\,\wt X(\tau)]$ corresponding to the couple $(v,w)$ chosen above writes down as:
\begin{eqnarray*}T_{v,w}:=&-&\sum_{x\in\Cal V(\sigma)}
	(-1)^{\kappa(x,\sigma)+\kappa(w,\tau)}\tau_w\to_v\sigma_x\\
	&+&\sum_{y\in\Cal V(\tau)}
	(-1)^{\kappa(v,\sigma)+\kappa(y,\tau)}\sigma_v\to_w\tau_y.
\end{eqnarray*}
The distance $d(x,v)$ between $x$ and $v$ in $\sigma$ is defined as the length of the (unique) path joining $x$ and $v$ in $\sigma$. It is clearly equal modulo $2$ to  the sum $\kappa(x,\sigma)+\kappa(v,\sigma)$. Similarly, $d(y,w)=\kappa(y,\tau)+\kappa(w,\tau)$ modulo $2$. Hence, using \eqref{gl1} and \eqref{gl2} we get:
\begin{equation*}
T_{v,w}=(-1)^{\kappa(v,\sigma)+\kappa(w,\tau)}\left(-\sum_{x\in\Cal V(\sigma)}(-1)^{d(x,v)}(\sigma\link vw\tau)_x+\sum_{y\in\Cal V(\tau)}(-1)^{d(y,w)}(\sigma\link vw\tau)_y
\right).
\end{equation*}
Now the distance $d(x,v)$ is the same if we compute it in $\sigma$ or in $\sigma\link vw\tau$, and similarly for $d(y,w)$. Finally, using the fact that $v$ and $w$ are neighbours in $\sigma\link vw\tau$, we have $d(x,w)=d(x,v)+1$ for any $x\in\Cal V(\sigma)$, the distance being computed in $\sigma\link vw\tau$. This finally gives:
\begin{equation*}
T_{v,w}=(-1)^{\kappa(v,\sigma)+\kappa(w,\tau)}\sum_{z\in\Cal V(\sigma\slink vw\tau)}(-1)^{d(z,w)}(\sigma\link vw\tau)_z,
\end{equation*}
which vanishes since $\sigma\link vw\tau$ is superfluous. 
\item
Finally, if $\sigma$, $\tau$ and $\sigma\link vw\tau$ are not superfluous, using \eqref{gl1} and \eqref{gl2}, both sides will be equal if we have:
\begin{eqnarray*}
\kappa(x,\sigma\link{v}{w}\tau)+\kappa(w,\sigma\link{v}{w}\tau)+\kappa(v,\sigma)&=&\kappa(x,\sigma)+1 \hbox{ modulo }2,\\
\kappa(y,\sigma\link{v}{w}\tau)+\kappa(w,\sigma\link{v}{w}\tau)+\kappa(w,\tau)&=&\kappa(y,\tau)\hbox{ modulo }2.
\end{eqnarray*}
Using the fact that $v$ and $w$ are neighbours, it rewrites as:
\begin{eqnarray*}
\kappa(x,\sigma\link{v}{w}\tau)+\kappa(x,\sigma)&=&\kappa(v,\sigma\link{v}{w}\tau)+\kappa(v,\sigma)\hbox{ modulo }2,\\
\kappa(y,\sigma\link{v}{w}\tau)+\kappa(y,\tau)&=&\kappa(w,\sigma\link{v}{w}\tau)+\kappa(w,\tau)\hbox{ modulo }2.
\end{eqnarray*}
These two last identities are always verified: looking for example at the right-hand side of the first one, moving vertex $v$ to a neighbour will change both $\kappa$'s by $\pm 1$. It remains then to jump from neighbour to neighbour up to $x$. The proof of the second identity is completely similar.
\end{enumerate}
\end{proof}
Using the identification of $\Cal{FT}/\Cal S$ with $\Cal{FT}'$, a straightforward consequence of Theorem \ref{main} is the following:
\begin{cor}
The linear map $\wt X$ is an injection of $\Cal{FT}'$ into $\Cal T$, and the product $\diamond: \Cal{FT}'\times \Cal{FT}'\to \Cal{FT}'$ verifies:
$$\wt X(\sigma\diamond\tau)=[\wt X(\sigma),\,\wt X(\tau)].$$
\end{cor}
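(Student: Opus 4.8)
The plan is to establish the displayed Lie-algebra identity by expanding both sides into signed combinations of rooted representatives of the \emph{same} linked free trees, and then matching coefficients modulo $2$; the three preliminary assertions will drop out along the way. First I would apply the definition \eqref{kappa} of $\wt X$ to $\sigma\diamond\tau$, turning the left-hand side into a triple sum, over vertices $v,x\in\Cal V(\sigma)$ or $v\in\Cal V(\sigma),\,w,y\in\Cal V(\tau)$, of signed trees $(\sigma\link{v}{w}\tau)_z$ with sign $\delta(v,w)\,\varepsilon(z,\sigma\link{v}{w}\tau)$. On the right-hand side I would expand $[\wt X(\sigma),\wt X(\tau)]$ using $s\to t=\sum_v s\to_v t$, producing a sum over the same pairs of vertices of grafted trees $\tau_w\to_v\sigma_x$ and $\sigma_v\to_w\tau_y$ carrying signs $\varepsilon(\cdot,\sigma)\varepsilon(\cdot,\tau)$. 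The bridge between the two is the grafting--linking dictionary \eqref{gl1}--\eqref{gl2}, which rewrites these grafted trees as $(\sigma\link{v}{w}\tau)_x$ and $(\sigma\link{v}{w}\tau)_y$; once applied, both sides are supported on the identical set of rooted trees and everything reduces to comparing scalar coefficients.

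Three of the claims are then essentially free. The first assertion, $\sigma\diamond\tau\in\Cal{FT}'$, holds because $\delta(v,w)$ contains the factor $\varepsilon(w,\sigma\link{v}{w}\tau)$, which kills any superfluous linked tree. The second, vanishing when $\sigma$ or $\tau$ is superfluous, holds because $\delta(v,w)$ also contains $\varepsilon(v,\sigma)\varepsilon(w,\tau)$; the same factors annihilate every term on the right, settling Case~1 of the identity. Antisymmetry follows from $\sigma\link{v}{w}\tau=\tau\link{w}{v}\sigma$ together with the fact that the two coefficients differ by a sign $(-1)^{d(v,w)}=-1$, since $v$ and $w$ are adjacent in $\rho:=\sigma\link{v}{w}\tau$. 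The whole sign bookkeeping in the remaining cases rests on one parity fact: because a root shift moves the root along a single edge, $\kappa(a,\rho)+\kappa(b,\rho)\equiv d(a,b)\pmod 2$, and the distance between two vertices of $\sigma$ is unchanged upon linking.

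Case~2, where $\sigma,\tau$ are non-superfluous but $\rho=\sigma\link{v}{w}\tau$ is superfluous, is the step I expect to be the main obstacle. Here the left-hand contribution vanishes outright since $\varepsilon(\cdot,\rho)=0$, and I must show the bundle $T_{v,w}$ of right-hand terms indexed by $(v,w)$ also vanishes. Using the parity fact to turn each $\kappa$-difference into a distance, and the adjacency $d(x,w)=d(x,v)+1$ in $\rho$, I would repackage $T_{v,w}$ as a single distance-alternating sum $(-1)^{\kappa(v,\sigma)+\kappa(w,\tau)}\sum_{z\in\Cal V(\rho)}(-1)^{d(z,w)}\rho_z$. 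The crux is to see that this sum is zero. By Lemma \ref{lem:superfluous} a superfluous $\rho$ consists of two isomorphic copies of a rooted tree joined by a central edge, so it admits a fixed-point-free involution $\varphi$ (the reflection exchanging the copies) with $\rho_{\varphi(z)}\cong\rho_z$; since $d(z,\varphi(z))$ is odd, the three-point parity identity in a tree gives $d(z,w)+d(\varphi(z),w)\equiv d(z,\varphi(z))\equiv 1\pmod 2$ for every $w$, so $\varphi$ pairs the vertices into mutually cancelling terms and the sum indeed vanishes.

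Finally, Case~3, with $\sigma,\tau,\rho$ all non-superfluous, reduces to verifying the two congruences $\kappa(x,\rho)+\kappa(x,\sigma)\equiv\kappa(v,\rho)+\kappa(v,\sigma)$ and $\kappa(y,\rho)+\kappa(y,\tau)\equiv\kappa(w,\rho)+\kappa(w,\tau)\pmod 2$, obtained from the naive coefficient comparison after using the adjacency of $v$ and $w$. Each is proved by the same sliding argument: moving the free vertex $x$ to an adjacent vertex of $\sigma$ changes both $\kappa(x,\rho)$ and $\kappa(x,\sigma)$ by $\pm1$, so their sum is invariant mod $2$ and one may slide $x$ down to $v$ edge by edge (and symmetrically $y$ to $w$). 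This matches all coefficients and completes the identity.
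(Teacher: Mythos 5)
Your argument for the bracket identity is correct, and it follows essentially the same route as the paper's proof of Theorem \ref{main}: expand both sides, translate graftings into linkings via \eqref{gl1}--\eqref{gl2}, and reduce all sign bookkeeping to the congruence $\kappa(a,\rho)+\kappa(b,\rho)\equiv d(a,b)\pmod 2$ together with the adjacency of $v$ and $w$ in $\rho=\sigma\link{v}{w}\tau$. In one respect you go beyond the paper: in Case~2 the paper merely asserts that the distance-alternating sum $\sum_{z}(-1)^{d(z,w)}\rho_z$ vanishes ``since $\rho$ is superfluous,'' whereas you supply the reason, namely the fixed-point-free involution exchanging the two copies of $s$ in $s\circ s$ (Lemma \ref{lem:superfluous}), with bipartiteness of trees forcing $d(z,w)+d(\varphi(z),w)$ to be odd. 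That added justification is sound and worth having. Note, though, that the paper does not reprove all of this for the Corollary: it deduces the Corollary from the already-established Theorem \ref{main} via the identification $\Cal{FT}'\sim\Cal{FT}/\Cal S$, so most of your work duplicates the proof of the Theorem rather than of the statement at hand.

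The genuine gap is that the Corollary also asserts that $\wt X$ is an \emph{injection} of $\Cal{FT}'$ into $\Cal T$, and your proposal never addresses this; it does not follow from the bracket identity. The (short) missing argument is: every rooted tree occurring in $\wt X(\tau)$ projects under $\pi$ to $\tau$, so for distinct $\tau,\tau'\in FT'$ the elements $\wt X(\tau)$ and $\wt X(\tau')$ have disjoint supports in the basis $T$ of $\Cal T$; and $\wt X(\tau)\neq 0$ for $\tau\in FT'$ because, by Lemma \ref{lem:superfluous}, the canonical representative $\tau_*$ arises from a unique vertex and carries coefficient $\varepsilon(*,\tau)=+1$. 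Hence the $\wt X(\tau)$, $\tau\in FT'$, are linearly independent and $\wt X$ restricted to $\Cal{FT}'$ is injective. You should add this paragraph (or an equivalent one) to make the proof of the Corollary complete.
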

As a consequence, the product $\diamond$ satisfies the Jacobi identity, and $\wt X$ is an embedding of Lie algebras.
%%%%%
\section{Application to elementary hamiltonians}
%%%%%
Keeping the previous notations, the vector field $\Cal F_{\underline a}\big(\wt X(\tau)\big)$ is hamiltonian for any (decorated) non-rooted tree $\tau$. Hence it can be uniquely written as $\{H_{\underline a}(\tau),-\}$ for some $H_{\underline a}(\tau)\in C^\infty(\R^{2r})$, called the \textsl{elementary hamiltonian} associated with $\tau$.
\begin{prop}
For any free trees $\sigma,\tau$ we have:
\begin{equation}\label{elem-ham}
\{H_{\underline a}(\sigma),\,H_{\underline a}(\tau)\}=H_{\underline a}(\sigma\diamond \tau).
\end{equation}
\end{prop}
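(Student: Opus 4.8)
The plan is to chain together three facts: that $\Cal F_{\underline a}$ is a pre-Lie (hence Lie) algebra morphism, the bracket identity of Theorem \ref{main}, and the classical fact that the assignment $H\mapsto\{H,-\}$ intertwines the Poisson bracket with the Lie bracket of vector fields. The whole content is then a diagram chase, the only delicate point being the ambiguity of a hamiltonian by an additive constant.

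First I would record the symplectic identity. Writing $X_f:=\{f,-\}$ for the hamiltonian vector field of $f\in C^\infty(\R^{2r})$, I claim that $[X_f,X_g]=X_{\{f,g\}}$, where $[\cdot,\cdot]$ denotes the ordinary Lie bracket of vector fields. This follows at once from the antisymmetry and Jacobi identity of the Poisson bracket: for any test function $h$,
$$[X_f,X_g](h)=\{f,\{g,h\}\}-\{g,\{f,h\}\}=\{\{f,g\},h\}=X_{\{f,g\}}(h).$$
One must of course check that this is consistent with the sign conventions fixed in the introduction, namely that the coefficients of $\partial_i$ in $\{f,-\}$ are exactly those displayed for a hamiltonian vector field; this is a one-line verification reading off the coefficients of $\partial_i$ and $\partial_{i+r}$ from \eqref{poisson-bracket}.

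Next I would invoke that $\Cal F_{\underline a}$ is a pre-Lie algebra morphism, and is therefore a Lie algebra morphism for the antisymmetrized products: $\Cal F_{\underline a}([u,v])=[\Cal F_{\underline a}(u),\Cal F_{\underline a}(v)]$ for $u,v\in\Cal T$, the right-hand bracket being the commutator of $\rhd$, which on vector fields is the usual Lie bracket (a classical computation, $a\rhd b-b\rhd a=[a,b]$). Applying this to $u=\wt X(\sigma)$, $v=\wt X(\tau)$ and feeding in Theorem \ref{main} yields
$$\Cal F_{\underline a}\big(\wt X(\sigma\diamond\tau)\big)=\Cal F_{\underline a}\big([\wt X(\sigma),\wt X(\tau)]\big)=\big[\Cal F_{\underline a}(\wt X(\sigma)),\,\Cal F_{\underline a}(\wt X(\tau))\big].$$
Now I translate both ends into hamiltonians. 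By definition of the elementary hamiltonian the left-hand side is $\{H_{\underline a}(\sigma\diamond\tau),-\}$, while by definition of $H_{\underline a}(\sigma)$ and $H_{\underline a}(\tau)$ together with the first step the right-hand side equals $[X_{H_{\underline a}(\sigma)},X_{H_{\underline a}(\tau)}]=X_{\{H_{\underline a}(\sigma),H_{\underline a}(\tau)\}}=\{\{H_{\underline a}(\sigma),H_{\underline a}(\tau)\},-\}$. Hence the hamiltonian vector fields attached to $H_{\underline a}(\sigma\diamond\tau)$ and to $\{H_{\underline a}(\sigma),H_{\underline a}(\tau)\}$ coincide.

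The remaining step, which I expect to be the only real obstacle, is the passage from equality of hamiltonian vector fields to equality of the hamiltonians themselves: on $\R^{2r}$ the map $H\mapsto\{H,-\}$ has kernel exactly the constants, so the previous paragraph only gives \eqref{elem-ham} up to an additive constant. Since the elementary hamiltonian $H_{\underline a}(\tau)$ is itself only determined up to a constant, I would resolve this by fixing once and for all a normalization of the elementary hamiltonians (for instance requiring vanishing constant term) and checking it is preserved under the Poisson bracket with this choice; alternatively the identity is simply read in the quotient of $C^\infty(\R^{2r})$ by the constants, which is the natural home of the elementary hamiltonians. Either way the equality of vector fields established above upgrades to \eqref{elem-ham}, completing the argument.
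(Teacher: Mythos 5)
Your proof is correct and follows essentially the same route as the paper: both chain the identity $[X_f,X_g]=X_{\{f,g\}}$ with the fact that $\Cal F_{\underline a}$ is a Lie algebra morphism and with Theorem \ref{main}, then conclude from the (essential) uniqueness of the hamiltonian of a hamiltonian vector field. Your extra paragraph on the additive-constant ambiguity is a point the paper passes over silently, but it does not change the argument.
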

\begin{proof}
We compute:
\begin{eqnarray*}
\big\{\{H_{\underline a}(\sigma),\,H_{\underline a}(\tau)\},-\big\}&=&\left[\{H_{\underline a}(\sigma),-\},\,\{H_{\underline a}(\tau),-\}\right]\\
&=&\left[\Cal F_{\underline a}\big(\wt X(\sigma)\big),\,\Cal F_{\underline a}\big(\wt X(\tau)\right]\\
&=&\Cal F_{\underline a}\left([\wt X(\sigma),\,\wt X(\tau)]\right)\\
&=&\Cal F_{\underline a}\circ\wt X(\sigma\diamond\tau)\\
&=&\{H_{\underline a}(\sigma\diamond\tau),-\}.
\end{eqnarray*}
One concludes by using the uniqueness of the hamiltonian representation of a hamiltonian vector field.
\end{proof}
%
%%%%%%%%%%%%%%%%%%%%%%%%%%%%%%%%%%%%%%%%%%%%%%%%%%%%%%%%%%%%%%%%%%%

\end{document}